\DeclareMathOperator{\real}{\mathrm{Re}}
\DeclareMathOperator{\RE}{\mathrm{Re}}
\DeclareMathOperator{\imag}{\mathrm{Im}}
\numberwithin{equation}{section}
\theoremstyle{plain}
\newtheorem{theorem}{Theorem}[section]
\newtheorem{corollary}[theorem]{Corollary}
\theoremstyle{definition}
\begin{document}
	\title[Radius of Starlikeness]{Radius of starlikeness for some classes containing non-univalent  functions}

\author[S. Yadav]{Shalu Yadav}
\address{Department of Mathematics, National Institute of Technology,
Tiruchirappalli--620 015, India}
\email{shaluyadav903@gmail.com }

\author[K. Sharma]{Kanika Sharma}
\address{Department of Mathematics, Atma Ram Sanatan Dharma College,
University of Delhi, Delhi--110 021, India}
\email{ksharma@arsd.du.ac.in; kanika.divika@gmail.com}

\author{V. Ravichandran}
\address{Department of Mathematics, National Institute of Technology,
Tiruchirappalli--620 015, India} \email{ravic@nitt.edu; vravi68@gmail.com}

\begin{abstract}A starlike univalent function  $f$ is characterized by the function $zf'(z)/f(z)$; several subclasses of these functions were studied in the past by restricting the function $zf'(z)/f(z)$ to take values  in a region $\Omega$ on the right-half plane, or, equivalently, by requiring the function $zf'(z)/f(z)$ to be subordinate to the corresponding mapping of the unit disk $\mathbb{D}$ to the region $\Omega$. 
	The mappings $w_1(z):=z+\sqrt{1+z^2}, w_2(z):=\sqrt{1+z}$ and $w_3(z):=e^z$ maps the unit disk $\mathbb{D}$ to various regions in the right half plane. For normalized analytic functions $f$ satisfying the conditions that $f(z)/g(z), g(z)/zp(z)$ and $p(z)$ are subordinate to the functions $w_i, i=1,2,3$ in various ways for some analytic functions $g(z)$ and $p(z)$, we determine the sharp radius for them to belong to various subclasses of starlike functions.
\end{abstract}

\keywords{Starlike functions; subordination; radius of starlikeness; lune}

\subjclass[2020]{30C80, 30C45}

\thanks{The research of  Shalu Yadav  is supported by an institute fellowship from NIT Trichy}

	\maketitle
\section{Introduction}Though complex numbers is not ordered field, the inequalities in the real line
can be extended to complex plane in a natural way using the concept of subordination. Let $f, g$ be two analytic functions defined on the open unit disk $ \mathbb{D}:=\{z\in\mathbb{C}:|z|<1\}$.  The  function $f$ is $\textit{subordinate}$ to the function $g$, written $f\prec g$, if
$f=g\circ w$    for some analytic function $w:\mathbb{D}\to \mathbb{D}$ with $w(0)=0$ (and such a function $w$ is known as a Schwartz function). A univalent  function $f:\mathbb{D}\to\mathbb{C}$ is always locally univalent or, in other words, it has non-vanishing derivative.  Therefore, the study of univalent functions can be restricted to the functions  normalized by $f(0)=f'(0)-1=0$. We let $\mathcal{A}$ denote the class of all analytic functions $f:\mathbb{D}\to\mathbb{C}$ normalized by  the conditions $f(0)=0$ and $f'(0)=1$. Since $f'(0)\neq 0$ for functions $f\in \mathcal{A}$, the functions in the class $\mathcal{A}$ are univalent in some disk centred at the origin.  The largest disk centered at the origin in which $f$ is univalent  is called as the radius of univalence of the function $f$.
Consider a subset $\mathcal{M} \subset  \mathcal{A}$ and a property $P$ (such as univalence, starlikeness, convexity) that the image of the functions in $\mathcal{M}$ may or may not have. It often happens that the image of $\mathbb{D}_r=\{z\in \mathbb{D}:|z|<r\}$ for some $r\leq 1$ has the property $P$; the largest $\rho_f$ such that the functions has the property $P$ in each disk $\mathbb{D}_r$ for $0<r\leq \rho_f$ is the radius of the property $P$ of the function $f$. The number $\rho:=\inf \{\rho_f:f\in \mathcal{M}\}$ is the radius of the property $P$ of the class $\mathcal{M}$; if $\mathcal{G}$ is the class of all $f\in \mathcal{A}$ characterized by the property $P$, then $\rho$ is called the $\mathcal{G}$-radius of the class $\mathcal{M}$. The $\mathcal{G}$-radius of the class $\mathcal{M}$ is denoted   by $R_{\mathcal{G}}(\mathcal{M})$ or simply by $R_{\mathcal{G}}$ if the class $\mathcal{M}$ is clear from the context.

The class $\mathcal{M}$ we are interested in is  characterized by the ratio between functions $f$ and $g$ belonging to  $\mathcal{A}$; several authors \cite{nav,ali2, gan, mac4, mac3, mac2, vravicmft, jain} have studied such classes. 
The classes which we are considering are as follows.
\begin{align*}
\mathcal{T}_1&=\{f\in\mathcal{A}:\frac{f}{g}\prec e^z, \frac{g}{zp}\prec e^z\quad \text{for some}\quad g\in\mathcal{A}\text\quad{and}\quad p\prec\sqrt{1+z}\}\\
\mathcal{T}_2&= \{f\in\mathcal{A}:\frac{f}{g}\prec \sqrt{1+z}, \frac{g}{zp}\prec \sqrt{1+z}\quad \text{for some}\quad g\in\mathcal{A}\quad\text{and}\quad p\prec e^z\}\\
\mathcal{T}_3&=\{f\in\mathcal{A}:\frac{f}{g}\prec z+\sqrt{1+z^2}, \frac{g}{zp}\prec z+\sqrt{1+z^2} \quad\text{for some}\quad g\in\mathcal{A}\quad
\text{and}\quad\\&\quad\quad p\prec z+\sqrt{1+z^2}\}.
\end{align*}These classes are motivated by a recent work of Ali, Sharma and Ravichandran \cite{Kanika} wherein similar classes were investigated. 
 These  classes  contain non-univalent functions  and this makes the study of such functions interesting.  We compute $\mathcal{G}$-radius when $\mathcal{G}$ is one of  the following  subclasses of starlike functions studied recently in the literature.  A starlike univalent function  $f$ is characterized by the condition  $\RE(zf'(z)/f(z))>0$. If we define the class $\mathcal{P}$ as the class of all functions $p(z)=1+c_1z+\cdots$ defined on $\mathbb{D}$ satisfying $\RE p(z)>0$, it follows that a function $f$ is starlike if and only if $zf'(z)/f(z)\in \mathcal{P}$. Several subclasses of starlike functions were studied in the past by restricting the function $zf'(z)/f(z)$ to take values  in a region $\Omega$ on the right-half plane, or, equivalently, by requiring the function $zf'(z)/f(z)$ to be subordinate to the corresponding mapping $\varphi:\mathbb{D}\to\Omega$:  $zf'(z)/f(z)\prec\varphi(z)$. For $\varphi(z)=(1+(1-2\alpha)z)/(1-z)$, $0\leq \alpha<1$, this class is the class $\mathcal{S}^*({\alpha})$ of all starlike functions of order $\alpha$.  Several other subclasses of starlike functions are defined by replacing the superordinate functions $\varphi$ by  functions having nice geometry. For the functions $\varphi$ defined by $\varphi(z):=(1+Az)/(1+Bz)$, with $-1\leq B < A \leq 1$, $e^z, 1+(4/3)z+(2/3)z^2, 1+\sin z, z+\sqrt{1+z^2}, 1+(zk+z^2)/(k^2-kz)$ where $k=\sqrt{2}+1$, $1+(2(\log((1+\sqrt{z})/(1-\sqrt{z}))^2\pi^2), 2/(1+e^{-z})$ and $1+z-z^3/3$, we denote the class of all functions $f\in\mathcal{A}$ with $zf^{\prime}(z)/f(z)\prec \varphi(z)$ respectively by $\mathcal{S}^*[A, B], \mathcal{S}^*_{exp}, \mathcal{S}^*_c,\mathcal{S}^*_{sin}, \mathcal{S}^*_{\leftmoon} \mathcal{S}^*_{R}, \mathcal{S}_p^*, \mathcal{S}_{SG}^*$ and $\mathcal{S}_{N_e}^*$.

 \section{The radius of univalence}
 \subsection{Class $\mathcal{T}_1$} This class consists of the analytic functions $f$ such that $f(z)/g(z)$ is subordinate to $e^z$ and $p$ is subordinate to $\sqrt{1+z}$ for some analytic function $g$. This class is non-empty as the function $f_1:\mathbb{D}\rightarrow\mathbb{C}$, defined by $$f_1(z)= z e^{2z}\sqrt{1+z}$$
  belongs to the class $\mathcal{T}_1$. The function $f_1$ satisfies the subordination condition for this class along with the functions $g_1, p_1:\mathbb{D}\rightarrow \mathbb{C}$ by
 \[g_1(z)=z e^z \sqrt{1+z}\quad\text{and}\quad p_1(z)=\sqrt{1+z}.\]
 The function $f_1$ plays the role of extremal for this class. Since

 \[f_1^{\prime}(z)=\frac{e^{2z}(4z^2+7z+2)}{2\sqrt{1+z}},\] it is clear that $f^{\prime}(-7+\sqrt{17}/8)=0$, and so the radius of univalence is at most $(-7+\sqrt{17})/8$. As the function $f_1$ is not univalent in $\mathbb{D}$, the class $\mathcal{T}_1$ contains non-univalent functions. Here also we find that radius of univalence and the radius of starlikeness for $\mathcal{T}_1$ are same and is $(-7+\sqrt{17})/8\approx -0.359612$.

 To do this, we need to first find a disk into which the disk $\mathbb{D}_r$ is mapped by the function $f\in\mathcal{T}_1$. Let $f\in \mathcal{T}_1$ and define the functions $p_1, p_2$ by $p_1(z)=f(z)/g(z)$ and $p_2(z)=g(z)/zp(z)$. Then $f(z)=zp(z)p_1(z)p_2(z)$ and
 \begin{align}
 \left|\frac{zf^{\prime}(z)}{f(z)}-1\right|\leq \left|\frac{zp^{\prime}(z)}{p(z)}\right|+\left|\frac{zp_1^{\prime}(z)}{p_1(z)}\right|+\left|\frac{zp_2^{\prime}(z)}{p_2(z)}\right|.
 \end{align}
 Using the bounds for $p,p_1, p_2$, we obtain
 \begin{align}\label{disk1}
  \left|\frac{zf^{\prime}(z)}{f(z)}-1\right|\leq\begin{cases}
  \frac{r(5-4r)}{2(1-r)}; \quad r\leq \sqrt{2}-1\\
  \frac{3+r+7r^2+3r^4}{2(1-r^2)}; r\geq\sqrt{2}-1.
  \end{cases}
 \end{align}
  for each function $f\in\mathcal{T}_1$. Clearly, for $|z|=r\leq (-7+\sqrt{17})/8$, we have
  \[
  \left|\frac{zf^{\prime}(z)}{f(z)}-1\right|\leq\frac{r(5-4r)}{2(1-r)}\leq 1.
  \]
  This shows that the radius of starlikeness is at least  $(-7+\sqrt{17})/8$. Since the radius of univalence is atmost $ (-7+\sqrt{17})/8$, it follows that the radius univalence and radius of starlikeness are both equal to $ (-7+\sqrt{17})/8$.
 \subsection{Class $\mathcal{T}_2$} Functions of this class are analytic functions $f$ satisfying the subordinations
 \[\frac{f(z)}{g(z)}\prec \sqrt{1+z},\quad \frac{g(z)}{zp(z)}\prec\sqrt{1+z}\quad\text{and}\quad p(z)\prec e^z,\] where the functions $g\in\mathcal{A}$ and $p\in\mathcal{P}$.

 This class is non-empty as the function $f_2:\mathbb{D}\rightarrow\mathbb{C}$ defined by
 \[f_2(z)=z(1+z)e^z\] belongs to the class $\mathcal{T}_2$. The function $f_2$ satisfies the required subordinations when we define the functions $g_2, p_2:\mathbb{D}\rightarrow\mathbb{C}$ by
 \[g_2(z)=z e^z \sqrt{1+z}\quad\text{and}\quad p_2(z)=e^z.\]
 This function $f_2$ plays the role of extremal for this class. Since
 \[f_2^{\prime}(z)=e^z(1+3z+z^2),\] it is clear that $f_2^{\prime}(-3+\sqrt{5}/2)=0$, and so the radius of univalence is atmost $(-3+\sqrt{5})/2$. We shall show that the radius of univalence and the radius of starlikeness for $\mathcal{T}_2$ are equal and commom value of the radius is precisely $(-3+\sqrt{5})/2\approx -0.381966$.

  To do this, we need to first find a disk into which the disk $\mathbb{D}_r$ is mapped by the function $f\in\mathcal{T}_2$. Let $f\in \mathcal{T}_2$ and define the functions $p_1, p_2$ by $p_1(z)=f(z)/g(z)$ and $p_2(z)=g(z)/zp(z)$. Then $f(z)=zp(z)p_1(z)p_2(z)$ and
 \begin{align}
 \left|\frac{zf^{\prime}(z)}{f(z)}-1\right|\leq \left|\frac{zp^{\prime}(z)}{p(z)}\right|+\left|\frac{zp_1^{\prime}(z)}{p_1(z)}\right|+\left|\frac{zp_2^{\prime}(z)}{p_2(z)}\right|.
 \end{align}
 Using the bounds for $p,p_1, p_2$, we obtain
 \begin{align}\label{disk2}
 \left|\frac{zf^{\prime}(z)}{f(z)}-1\right|\leq\begin{cases}
 \frac{r(-2+r)}{(-1+r)}; \quad r\leq \sqrt{2}-1\\
 \frac{1+4r+6r^2+r^4}{4(1-r^2)}; r\geq\sqrt{2}-1.
 \end{cases}
 \end{align}
 for each function $f\in\mathcal{T}_2$. Clearly, for $|z|=r\leq (-3+\sqrt{5})/2$, we have

\[
\left|\frac{zf^{\prime}(z)}{f(z)}-1\right|\leq\frac{r(-2+r)}{(-1+ r)}\leq 1.
\]
This shows that the radius of starlikeness is at least  $(-3+\sqrt{5})/2$. Since the radius of univalence is atmost $ (-3+\sqrt{5})/2$, it follows that the radius univalence and radius of starlikeness are both equal to $(-3+\sqrt{5})/2$.

\subsection{Class $\mathcal{T}_3$}
Recall that a function $f\in\mathcal{A}$ belongs the class  $\mathcal{T}_3$ if there are functions $g\in\mathcal{A}$ and $p\in \mathcal{P}$ satisfying the subordinations
\[ \frac{f(z)}{g(z)}\prec z+\sqrt{1+z^2}, \quad \frac{g(z)}{zxp(z)}\prec z+\sqrt{1+z^2}\quad \text{and }\quad  p(z)\prec z+\sqrt{1+z^2}.\]
This class is non-empty as the function  $f_3:\mathbb{D}\rightarrow\mathbb{C}$ defined by
\[f_3(z)=z(z+\sqrt{1+z^2})^3\] belongs to the class $\mathcal{T}_3$. Indeed, the function $f_3$ satisfies the required subordinations when we define the functions   $  g_3, p_3:\mathbb{D}\rightarrow\mathbb{C}$ by
\[  g_3(z)=z(z+\sqrt{1+z^2})^2\quad\text{and}\quad p_3(z)=z+\sqrt{1+z^2}.\]
The function $f_3$ plays the role of extremal for this class. Since
\[f'_3(z)=\frac{\left(z+\sqrt{1+z^2}\right)^3 \left(3 z+\sqrt{1+z^2}\right)}{\sqrt{1+z^2}},\]
it is clear that
$f_3'(-1/\sqrt{8})=0$, and so the radius of univalence is at most $1/\sqrt{8}$. Also, since the function $f_3$ is not univalent in $\mathbb{D}$, the class $\mathcal{T}_3$ contains non-univalent functions. Indeed, we shall show that the radius of univalence and the radius of starlikeness for $\mathcal{T}_3$ are equal and the common value of the radius  is  precisely $1/\sqrt{8}\approx 0.353553$.

To do this, we need to first find a disk into which the disk $\mathbb{D}_r$ is mapped by the function $f\in\mathcal{T}_3$. Let $f\in\mathcal{T}_3$ and define the functions $p_1, p_2$ by $p_1(z)=f(z)/g(z)$ and $p_2(z)=g(z)/zp(z)$. Then $f(z)=zp(z)p_1(z)p_2(z)$ and
\begin{align}
\left|\frac{zf^{\prime}(z)}{f(z)}-1\right|\leq \left|\frac{zp^{\prime}(z)}{p(z)}\right|+\left|\frac{zp_1^{\prime}(z)}{p_1(z)}\right|+\left|\frac{zp_2^{\prime}(z)}{p_2(z)}\right|.
\end{align}
For $p\in \mathcal{P} $ with $p(z)\prec z+\sqrt{1+z^2}$,   Afis\ and  Noor  \cite{asif}  have shown that
\[ |p(z)|\leq r+\sqrt{1+r^2}, \quad
\left|\frac{zp^{\prime}(z)}{p(z)}\leq \frac{r}{\sqrt{1+r^2}}\right| \quad (|z|\leq r).\]
Using these inequalities for $p, p_1, p_2\prec z+\sqrt{1+z^2} $, we see that
\begin{align}\label{disk}
\left|\frac{zf^{\prime}(z)}{f(z)}-1\right|\leq \frac{3r}{\sqrt{1+r^2}}, \quad |z|\leq r,
\end{align}
for each function $f\in\mathcal{T}_3$. Clearly, for $|z|=r\leq 1/\sqrt{8}$, we have
\[
\left|\frac{zf^{\prime}(z)}{f(z)}-1\right|\leq \frac{3r}{\sqrt{1+r^2}} \leq 1.
\]
This shows that the radius of starlikeness is at least $1/\sqrt{8}$. Since the radius of univalence is at most $1/\sqrt{8}$, it follows that the radius of univalence and the radius of starlikeness are both equal to $1/\sqrt{8}$.
\section{Radius of starlikeness}
Our  first theorem gives the sharp radius of starlikeness of order $\alpha$ of the  classes $\mathcal{T}_1, \mathcal{T}_2$ and $\mathcal{T}_3$. We shall  show that this  radius is the same for   the   subclass $\mathcal{S}^*_{\alpha}$ consisting of all functions $f  \in \mathcal{S}^*(\alpha)$   satisfying $|zf^{\prime}(z)/f(z)-1|<1-\alpha$.

\begin{theorem}\label{thstar} 
	The following results hold for the classes $\mathcal{S}^*(\alpha)$ and $\mathcal{S}^*_{\alpha}$.
\begin{enumerate}
\item[(i)]$R_{\mathcal{S}^*(\alpha)}(\mathcal{T}_1)=R_{\mathcal{S}^*_\alpha}(\mathcal{T}_1)  =(7-2\alpha-\sqrt{17+4\alpha+4\alpha^2})/8 $
\item[(ii)] $R_{\mathcal{S}^*(\alpha)}(\mathcal{T}_2)=R_{\mathcal{S}^*_\alpha}(\mathcal{T}_2)  =((3-\alpha-\sqrt{5-2\alpha+\alpha^2}))/2 $
\item[(iii)]  $R_{\mathcal{S}^*(\alpha)}(\mathcal{T}_3)=R_{\mathcal{S}^*_\alpha}(\mathcal{T}_3) =(1-\alpha)/(\sqrt{8+2\alpha-\alpha^2})$
\end{enumerate}
\end{theorem}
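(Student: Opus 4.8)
The plan is to prove all three double equalities simultaneously through a single squeezing argument built on the geometric inclusion $\mathcal{S}^*_\alpha\subset\mathcal{S}^*(\alpha)$. The open disk $\{w:|w-1|<1-\alpha\}$ has leftmost point $w=\alpha$, so it lies entirely in the half-plane $\{w:\RE w>\alpha\}$; hence satisfying the $\mathcal{S}^*_\alpha$ condition is strictly stronger than satisfying the $\mathcal{S}^*(\alpha)$ condition, and therefore $R_{\mathcal{S}^*_\alpha}(\mathcal{T}_i)\le R_{\mathcal{S}^*(\alpha)}(\mathcal{T}_i)$ for each $i$. I would then establish the displayed value $R$ as a \emph{lower} bound for the smaller quantity $R_{\mathcal{S}^*_\alpha}$ and as an \emph{upper} bound for the larger quantity $R_{\mathcal{S}^*(\alpha)}$, giving $R\le R_{\mathcal{S}^*_\alpha}\le R_{\mathcal{S}^*(\alpha)}\le R$ and hence equality throughout.

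For the lower bounds I would reuse the disk estimates \eqref{disk1}, \eqref{disk2} and \eqref{disk} from Section 2. For $f\in\mathcal{T}_i$, membership in $\mathcal{S}^*_\alpha$ on $\mathbb{D}_r$ is exactly $|zf'(z)/f(z)-1|<1-\alpha$, so it suffices to find the largest $r$ for which the right-hand side of the relevant estimate is at most $1-\alpha$. On the small-$r$ branch these read $r(5-4r)/(2(1-r))\le 1-\alpha$, $r(2-r)/(1-r)\le 1-\alpha$ and $3r/\sqrt{1+r^2}\le 1-\alpha$; each left-hand side is strictly increasing in $r$ (the numerators arising in the quotient-rule derivatives are $8r^2-16r+10$ and $r^2-2r+2$, both with negative discriminant, hence positive), so the threshold is found by equality. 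Clearing denominators turns the first two into $4r^2-(7-2\alpha)r+2(1-\alpha)=0$ and $r^2-(3-\alpha)r+(1-\alpha)=0$, whose discriminants simplify to $17+4\alpha+4\alpha^2$ and $5-2\alpha+\alpha^2$, while squaring the third gives $r^2(8+2\alpha-\alpha^2)=(1-\alpha)^2$; selecting the admissible (smaller) root reproduces the three stated radii. A short monotonicity check shows $R$ is largest at $\alpha=0$, where it equals $(7-\sqrt{17})/8\approx0.360$ and $(3-\sqrt5)/2\approx0.382$, both below $\sqrt2-1$, confirming that the small-$r$ branch is the operative one on all of $[0,R]$, so $|zf'/f-1|<1-\alpha$ throughout $\mathbb{D}_R$ and $f\in\mathcal{S}^*_\alpha$.

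For sharpness I would test the extremal functions at the single boundary point $z=-r$. Logarithmic differentiation gives $zf_1'/f_1-1=2z+z/(2(1+z))$, $zf_2'/f_2-1=z+z/(1+z)$ and $zf_3'/f_3-1=3z/\sqrt{1+z^2}$; at $z=-r$ each of these is real, negative, and equal in modulus to the corresponding small-$r$ estimate. Hence $\RE(zf_i'(z)/f_i(z))=1-(\text{estimate})$ at $z=-r$, and this real part equals $\alpha$ precisely when $r=R$. For every $r>R$ it drops below $\alpha$, so $f_i$ fails to lie in $\mathcal{S}^*(\alpha)$ on $\mathbb{D}_r$, whence $R_{\mathcal{S}^*(\alpha)}(\mathcal{T}_i)\le R$, closing the squeeze.

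The algebra is routine; the step needing genuine care is the sharpness claim, where one must ensure that $z=-r$ is the true worst point, i.e.\ that $\RE(zf_i'/f_i)$ is actually minimized there rather than merely that $|zf_i'/f_i-1|$ is maximized. This is transparent in all three cases because at $z=-r$ the quantity $zf_i'/f_i-1$ is real and negative, so its real part coincides with $-|zf_i'/f_i-1|$ and matches the disk estimate exactly; verifying this alignment for each extremal function, together with confirming the branch membership $R\le\sqrt2-1$, is the main obstacle and the only place where the geometry of the three superordinate mappings genuinely enters.
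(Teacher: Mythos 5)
Your proposal is correct and follows essentially the same route as the paper: the lower bounds come from the disk estimates \eqref{disk1}, \eqref{disk2}, \eqref{disk} compared against $1-\alpha$, and sharpness comes from evaluating $zf_i'(z)/f_i(z)$ for the extremal functions $f_1,f_2,f_3$ at $z=-\rho$, where the quantity is real and attains both the real-part bound $\alpha$ and the modulus bound $1-\alpha$ simultaneously. Your explicit squeeze $R\le R_{\mathcal{S}^*_\alpha}\le R_{\mathcal{S}^*(\alpha)}\le R$ and the branch check $R\le\sqrt2-1$ merely make precise what the paper leaves implicit.
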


\begin{proof}$(i)$ The function defined by $m(r)=(4r^2-7r+2)/2(1-r), 0\leq r<1$ is a decreasing function. Let $\rho= R_{\mathcal{S}^*_\alpha}(\mathcal{T}_1)$ is the root of the equation $m(r)=\alpha$. From $\eqref{disk1}$, it follows that
	\[
	\real\frac{zf^{\prime}(z)}{f(z)}\geq \frac{4r^2-7r+2}{2(1-r)}=m(r)\geq m(\rho)=\alpha.
	\]
	This shows that $R_{\mathcal{S}^*_\alpha}(\mathcal{T}_1)$ is atleast $\rho$. At $z=-R_{\mathcal{S}^*_\alpha}(\mathcal{T}_1)=-\rho$, the function $f_1$ satisfies
	\[	\frac{zf_1^{\prime}(z)}{f_1(z)}= \frac{4\rho^2-7\rho+2}{2(1-\rho)}=\alpha.\]
	Thus the result is sharp.
	
	Also,
	\[\left|\frac{zf_1^{\prime}(z)}{f_1(z)}-1\right|= 1-\alpha.\]
	This proves that the radii $R_{\mathcal{S}^*(\alpha)}(\mathcal{T}_1)$ and $R_{\mathcal{S}^*_\alpha}(\mathcal{T}_1)$ are same.
	
	$(ii)$ The function defined by $n(r)=(r^2-3r+1)/(1-r), 0\leq r<1$ is a decreasing function. Let $\rho=R_{\mathcal{S}^*_\alpha}(\mathcal{T}_2)$ is the root of the equation $n(r)=\alpha$. From $\eqref{disk2}$, it follows that
	\[\real\frac{zf^{\prime}(z)}{f(z)}\geq \frac{1-3r+r^2}{1-r}=n(r)\geq n(\rho)=\alpha.\]
	This shows that $R_{\mathcal{S}^*_\alpha}(\mathcal{T}_2)$ is atleast $\rho$. At $z=-R_{\mathcal{S}^*_\alpha}(\mathcal{T}_2)=-\rho$, the function $f_2$ satisfies
	\[\frac{zf_2^{\prime}(z)}{f_2(z)}=\frac{1-3\rho+\rho^2}{1-\rho}=\alpha.\]
	Also, for the function $f_2$ we have
	\[\left|\frac{zf_2^{\prime}(z)}{f_2(z)}-1\right|= 1-\alpha.\]
	Hence, the radii $R_{\mathcal{S}^*(\alpha)}$ and $R_{\mathcal{S}^*_\alpha}$ are same for the class $\mathcal{T}_2$.
	
	$(iii)$ The function $s(r)=1-3r/\sqrt{1+r^2}, 0\leq r<1$ is a decreasing function. Let  $\rho= R_{\mathcal{S}^*_\alpha}(\mathcal{T}_3)$ is the root of the equation $s(r)=\alpha$. From $\eqref{disk}$, it follows
	
	\[\real\left(\frac{zf^{\prime}(z)}{f(z)}\right)\geq 1-\frac{3r}{\sqrt{1+r^2}}=s(r)\geq s (\rho)=\alpha.\]
	This shows that  $R_{\mathcal{S}^*_\alpha}(\mathcal{T}_3)$ is atleast $\rho$. At $z=-R_{\mathcal{S}^*_\alpha}(\mathcal{T}_3)=-\rho$.
For the function $f_3(z)$, we have
\[\frac{zf^{\prime}_3(z)}{f_3(z)}=1-\frac{3\rho}{\sqrt{1+\rho^2}}=\alpha.\]

 Now, considering
 \[\left|\frac{zf^{\prime}(z)}{f(z)}-1\right|\leq\frac{3r}{\sqrt{1+r^2}}\leq 1-\alpha.\]
The function $f_3(z)$ provides sharpness, as
\[\left|\frac{zf^{\prime}_3(z)}{f_3(z)}-1\right|=\frac{3z}{\sqrt{1+z^2}}=1-\alpha.\]
The radii $R_{\mathcal{S}^*(\alpha)}$ and $R_{\mathcal{S}^*_\alpha}$ are same for the class $\mathcal{T}_3$.
\end{proof}
The function $\varphi_{PAR}:\mathbb{D}\rightarrow\mathbb{C}$ given by
\[\varphi_{PAR}:= 1+\frac{2}{\pi^2}\left(\log\frac{1+\sqrt{z}}{1-\sqrt{z}}\right)^2,\quad \imag\sqrt{z}\geq 0\]
maps $\mathbb{D}$ on the parabolic region given by $\varphi_{PAR}(\mathbb{D})=\{w=u+iv:v^2<2u-1=\{w:\real w> |w-1|\}$. The class $\mathcal{S}_p^*:=\mathcal{S}^*(\varphi_{PAR})=\{f\in\mathcal{A}:zf^{\prime}(z)/f(z)\prec\varphi_{PAR}(z)\}$ was introduced by R{\o}nning \cite{ron}, and is known as the class of parabolic starlike functions. The class $\mathcal{S}^*_p$ consists of the functions $f\in\mathcal{A}$ satisfying
\[\real\left(\frac{zf^{\prime}(z)}{f(z)}\right)>\left|\frac{zf^{\prime}(z)}{f(z)}-1\right|, z\in\mathbb{D}.\]
Evidently, every parabolic starlike function is also starlike of order $1/2$.

 Shanmugam and Ravichandran \cite{vravicmft} had proved that for $1/2<a<3/2$, then
\begin{align}\label{parabola}
\{w:|w-a|<a-1/2\}\subset \{w:\real w>|w-1|\}.
\end{align}
The following result gives the radius of parabolic starlikeness for the classes $\mathcal{T}_1, \mathcal{T}_2$ and $\mathcal{T}_3$.
\begin{corollary}\label{thpara}
	The following results holds for the class $\mathcal{S}_p^*$.
	\begin{enumerate}
		\item[(i)] $R_{\mathcal{S}_p^*}(\mathcal{T}_1)= (3-\sqrt{5})/4 \approx 0.190983$
		\item[(ii)] $R_{\mathcal{S}_p^*}(\mathcal{T}_2)= (5-\sqrt{17})/4\approx 0.219224$
		\item[(iii)] $R_{\mathcal{S}_p^*}(\mathcal{T}_3)=1/\sqrt{35}\approx 0.169031$
	\end{enumerate}
\end{corollary}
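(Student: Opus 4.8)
The plan is to reduce parabolic starlikeness to the disk inclusion \eqref{parabola}. Taking $a=1$ there, the open disk $\{w:|w-1|<1/2\}$ is contained in the parabolic region $\{w:\real w>|w-1|\}$. Consequently, if for a function $f\in\mathcal{A}$ the image of $zf'(z)/f(z)$ stays inside the disk centered at $1$ of radius $1/2$, then $f$ is parabolic starlike. So for each class I would reuse the disk estimates already established in Section 2, namely \eqref{disk1}, \eqref{disk2} and \eqref{disk}, and locate the largest $r$ for which the right-hand side of the relevant bound equals $1/2$.

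Concretely, for $\mathcal{T}_1$ the bound \eqref{disk1} on the range $r\le\sqrt{2}-1$ reads $|zf'(z)/f(z)-1|\le r(5-4r)/(2(1-r))$, so I would solve $r(5-4r)/(2(1-r))=1/2$. This clears to $4r^2-6r+1=0$, whose root in $(0,1)$ is $(3-\sqrt5)/4\approx0.191$, giving $R_{\mathcal{S}_p^*}(\mathcal{T}_1)=(3-\sqrt5)/4$. The same recipe handles $\mathcal{T}_2$: from \eqref{disk2} I would solve $r(2-r)/(1-r)=1/2$, i.e.\ $2r^2-5r+1=0$, obtaining $(5-\sqrt{17})/4\approx0.219$. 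For $\mathcal{T}_3$ there is a single branch, and \eqref{disk} gives $3r/\sqrt{1+r^2}=1/2$; squaring yields $35r^2=1$, so $R_{\mathcal{S}_p^*}(\mathcal{T}_3)=1/\sqrt{35}$.

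Sharpness would come from the extremal functions $f_1,f_2,f_3$ of Section 2, evaluated at $z=-\rho$. The crucial algebraic fact is that in each case the value $zf_i'(z)/f_i(z)$ at $z=-\rho$ is real and equals $m(\rho)$, $n(\rho)$ or $s(\rho)$ respectively (from the proof of Theorem \ref{thstar}), and this value together with the corresponding disk radius sums identically to $1$; hence when the radius equals $1/2$ the value $zf_i'(z)/f_i(z)$ also equals $1/2$. Since a real $w=1/2$ satisfies $\real w=|w-1|$, the point lies exactly on the boundary parabola, so no larger radius can be admitted.

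The computation itself is entirely routine; the only point requiring care is confirming that the two radii obtained for $\mathcal{T}_1$ and $\mathcal{T}_2$ fall in the regime $r\le\sqrt2-1$ where the first branch of \eqref{disk1} and \eqref{disk2} is valid, since using the wrong branch would produce a spurious answer. This is immediate upon comparing the values $0.191$ and $0.219$ with $\sqrt2-1\approx0.414$, so I anticipate no genuine obstacle.
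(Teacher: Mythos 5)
Your proposal is correct and follows essentially the same route as the paper: the lower bound comes from the inclusion \eqref{parabola} with $a=1$ applied to the disk estimates \eqref{disk1}--\eqref{disk} (which is exactly what the paper's appeal to $\mathcal{S}^*_{1/2}\subset\mathcal{S}^*_p$ together with Theorem~\ref{thstar} at $\alpha=1/2$ amounts to), and sharpness comes from evaluating $zf_i'/f_i$ at $z=-\rho$, where it is real and lands on the boundary parabola $\RE w=|w-1|$. Your explicit check that the computed radii lie in the branch $r\le\sqrt{2}-1$ of \eqref{disk1} and \eqref{disk2} is a detail the paper leaves implicit, but otherwise the arguments coincide.
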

\begin{proof}
In equation $\eqref{parabola}$, putting $a=1$ gives $\mathcal{S}^*_{1/2}\subset \mathcal{S}^*_p$. Every parabolic starlike function is also starlike function of order $1/2$, whence the inclusion $\mathcal{S}^*_{1/2}\subset \mathcal{S}^*_p\subset \mathcal{S}^*(1/2)$.
Therefore, for any class $\mathcal{F}$, readily $R_{\mathcal{S}^*_{1/2}}(\mathcal{F})\leq R_{\mathcal{S}^*_{p}}(\mathcal{F})\leq R_{\mathcal{S}^*(1/2)}(\mathcal{F})$.
			
			When $\mathcal{F}=\mathcal{T}_i, i=1,2,3$, Theorem $\ref{thstar}$ gives $R_{\mathcal{S}^*(\alpha)}(\mathcal{T}_i)= R_{\mathcal{S}^*_{\alpha}}(\mathcal{T}_i)$. This shows that  $R_{\mathcal{S}^*_{1/2}}(\mathcal{T}_i)= R_{\mathcal{S}^*_{p}}(\mathcal{T}_i)= R_{\mathcal{S}^*(1/2)}(\mathcal{T}_i)$. So for $\alpha=1/2$, from Theorem $\ref{thstar}$, it follows that $R_{\mathcal{S}_p^*}(\mathcal{T}_1)= (3-\sqrt{5})/4, R_{\mathcal{S}_p^*}(\mathcal{T}_2)= (5-\sqrt{17})/4 $ and $ R_{\mathcal{S}_p^*}(\mathcal{T}_3)=1/\sqrt{35}$.
			
$(i)$ For the function $f_1(z)=ze^{2z}\sqrt{1+z}$, at $z=-R_{\mathcal{S}_p^*}(\mathcal{T}_1)=-\rho$, we have
	\[\real\frac{zf^{\prime}_1(z)}{f_1(z)}=\frac{4\rho^2-7\rho+2}{2(1-\rho)}=\frac{1}{2}=\left|\frac{zf^{\prime}_1(z)}{f_1(z)}-1\right|.\]
	Thus, $R_{\mathcal{S}_p^*}(\mathcal{T}_1)\leq (3-\sqrt{5})/4$.

	$(ii)$ For the function $f_2(z)=z(1+z)e^z$, at $z=-R_{\mathcal{S}_p^*}(\mathcal{T}_2)=-\rho$, we obtain
	\[\real\frac{zf^{\prime}_2(z)}{f_2(z)}=\frac{\rho^2+3\rho+1}{(1+\rho)}=\frac{1}{2}=\left|\frac{zf^{\prime}_2(z)}{f_2(z)}-1\right|.\]
	Thus $R_{\mathcal{S}_p^*}(\mathcal{T}_2)\leq (5-\sqrt{17})/4$.

	$(iii)$ For the function $f_3(z)=z(z+\sqrt{1+z^2})^3$, at $z=-R_{\mathcal{S}_p^*}(\mathcal{T}_3)=-\rho$, we have
	\[\real\frac{zf^{\prime}_3(z)}{f_3(z)}=1-\frac{3\rho}{\sqrt{1+\rho^2}}=\frac{1}{2}=\left|\frac{zf^{\prime}_3(z)}{f_3(z)}-1\right|.\]
	This proves that $R_{\mathcal{S}_p^*}(\mathcal{T}_3)\leq1/\sqrt{35}$.
\end{proof}
In 2015, Mendiratta $\textit{et al.}$ \cite{men1} introduced the class of starlike functions associated with the exponential function as $\mathcal{S}^*_e=\mathcal{S}^*(e^z)$ and it satisfies the condition $|\log zf^{\prime}(z)/f(z)<1|$. They had also proved that, for $e^{-1}\leq a \leq (e+e^{-1}/2)$,
\begin{align}\label{exp}
\{w\in\mathbb{C}:|w-a|<a-e^{-1}\}\subseteq \{w\in\mathbb{C}:|\log w|<1\}.
\end{align}
\begin{corollary}\label{thexp}
	The following results hold for the class $\mathcal{S}_e^*$.
	\begin{enumerate}
		\item[(i)] $R_{\mathcal{S}_e^*}(\mathcal{T}_1)= (-2+7e-\sqrt{4+4e+17e^2})/8e\approx0.237983$
		\item[(ii)] $R_{\mathcal{S}_e^*}(\mathcal{T}_2)=(-1+3e-\sqrt{1-2e+5e^2})/2e\approx 0.267302$
		\item[(iii)] $R_{\mathcal{S}_e^*}(\mathcal{T}_3)=(1-2e+e^2)/(-1+2e+8e^2)\approx 0.215546$
	\end{enumerate}
\end{corollary}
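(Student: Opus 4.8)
The plan is to reuse the strategy of Corollary~\ref{thpara}, replacing the parabolic containment \eqref{parabola} by the exponential containment \eqref{exp}. First I would specialize \eqref{exp} to the center $a=1$. Since $e^{-1}\le 1\le (e+e^{-1})/2$, the hypothesis of \eqref{exp} is met, and the lemma gives the inclusion $\{w:|w-1|<1-e^{-1}\}\subseteq\{w:|\log w|<1\}$, the right-hand side being precisely the region defining $\mathcal{S}^*_e$. Hence, for $f$ in any of the three classes, it suffices to show $|zf'(z)/f(z)-1|\le 1-e^{-1}$ on $\mathbb{D}_r$ in order to conclude $f\in\mathcal{S}^*_e$ there.

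Next I would substitute the disk estimates into this criterion and solve for the largest admissible $r$. For $\mathcal{T}_1$, \eqref{disk1} gives $|zf'(z)/f(z)-1|\le r(5-4r)/(2(1-r))$ on the branch $r\le\sqrt2-1$; equating this to $1-e^{-1}=(e-1)/e$ and clearing denominators reduces to $4er^2-(7e-2)r+2(e-1)=0$, whose relevant (smaller) root is $(-2+7e-\sqrt{4+4e+17e^2})/(8e)$. For $\mathcal{T}_2$, \eqref{disk2} gives the bound $r(2-r)/(1-r)$, and $r(2-r)/(1-r)=(e-1)/e$ becomes $er^2-(3e-1)r+(e-1)=0$, with smaller root $(-1+3e-\sqrt{1-2e+5e^2})/(2e)$. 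For $\mathcal{T}_3$, \eqref{disk} gives $3r/\sqrt{1+r^2}$, and squaring $3r/\sqrt{1+r^2}=(e-1)/e$ yields $r^2(8e^2+2e-1)=(e-1)^2$, so $R_{\mathcal{S}_e^*}(\mathcal{T}_3)=(e-1)/\sqrt{8e^2+2e-1}$. In the first two cases I would then check that the chosen root lies below $\sqrt2-1$, so that the first branch of the piecewise bound is indeed the operative one; numerically the three radii are about $0.2380$, $0.2673$ and $0.2155$, all less than $\sqrt2-1\approx0.4142$, which also confirms the stated numerical values.

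For sharpness I would turn to the extremal functions $f_1,f_2,f_3$. A direct logarithmic differentiation gives $zf_1'(z)/f_1(z)=1+2z+z/(2(1+z))$, $zf_2'(z)/f_2(z)=1+z+z/(1+z)$ and $zf_3'(z)/f_3(z)=1+3z/\sqrt{1+z^2}$. Evaluating each at $z=-\rho$, where $\rho$ is the corresponding radius, the disk estimate is met with equality, so the value equals $1-(1-e^{-1})=e^{-1}$. Since $|\log(e^{-1})|=1$, the number $e^{-1}$ is the nearest real boundary point of the region $\{w:|\log w|<1\}$; thus at $z=-\rho$ the quantity $zf_i'(z)/f_i(z)$ lands exactly on the boundary, and for any larger radius it would drop below $e^{-1}$ and leave the region. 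This shows the radius cannot be enlarged and matches the lower bound obtained above.

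The algebra in the middle paragraph — solving two quadratics and one square-root equation and extracting the correct root — is routine. The points needing genuine care are verifying that $a=1$ satisfies the hypothesis of \eqref{exp} (so that the largest disk about $1$ inside the exponential region indeed has radius $1-e^{-1}$), and confirming in the sharpness step that the extremal values reach the boundary point $e^{-1}$ rather than merely entering the region; these two facts pin the radius from above and below, respectively.
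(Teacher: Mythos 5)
Your proposal is correct and follows essentially the same route as the paper: the containment \eqref{exp} specialized to $a=1$ reduces everything to the disk bounds \eqref{disk1}, \eqref{disk2}, \eqref{disk} with threshold $1-e^{-1}$ (the paper phrases this as $\mathcal{S}^*_{1/e}\subset\mathcal{S}^*_e\subset\mathcal{S}^*(1/e)$ and then quotes Theorem~\ref{thstar} with $\alpha=1/e$, which is the same computation), and sharpness is checked on $f_1,f_2,f_3$ at $z=-\rho$ exactly as you do. Note that your expression $(e-1)/\sqrt{8e^2+2e-1}$ for part (iii) is the correct closed form matching the stated numerical value $0.215546$; the displayed formula $(1-2e+e^2)/(-1+2e+8e^2)$ in the corollary is missing a square root and evaluates to about $0.046$.
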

\begin{proof}
Mendiratta \textit{et. al} provided the inclusion $\eqref{exp}$, which gives $\mathcal{S}^*_{1/e}\subset \mathcal{S}^*_e$. It was also shown in \cite[Theorem 2.1(i)]{men1} that $\mathcal{S}^*_{e}\subset \mathcal{S}^*_{1/e}$. Therefore, $\mathcal{S}^*_{1/e}\subset \mathcal{S}^*_e\subset \mathcal{S}^*(1/e)$, which provides the required radii as a consequence of Theorem $\ref{thstar}$.

$(i)$ For the function $f_1(z)=ze^{2z}\sqrt{1+z}$, at $z=-R_{\mathcal{S}_e^*}(\mathcal{T}_1)=-\rho$, we have
\[\left|\log\frac{zf^{\prime}_1(z)}{f_1(z)}\right|=\left|\log\frac{4\rho^2-7\rho+2}{2(1-\rho)}\right|=1.\]
Thus, $R_{\mathcal{S}_e^*}(\mathcal{T}_1)\leq (-2+7e-\sqrt{4+4e+17e^2})/8e$.

 $(ii)$ For the function $f_2(z)=z(1+z)e^z$, at $z=-R_{\mathcal{S}_e^*}(\mathcal{T}_2)=-\rho$, we get
\[\left|\log\frac{zf^{\prime}_2(z)}{f_2(z)}\right|=\left|\log\frac{1-3\rho+\rho^2}{(1-\rho)}\right|=1.\]
Thus, $R_{\mathcal{S}_e^*}(\mathcal{T}_2)\leq(-1+3e-\sqrt{1-2e+5e^2})/2e$.

$(iii)$ For the function $f_3(z)=z(z+\sqrt{1+z^2})^3$, at $z=-R_{\mathcal{S}_e^*}(\mathcal{T}_3)=-\rho$, we obtain
\[\left|\log\frac{zf^{\prime}_3(z)}{f_3(z)}\right|=\left|\log\left(1-\frac{3\rho}{\sqrt{1+\rho^2 }}\right)\right|=1.\]
This proves that $R_{\mathcal{S}_e^*}(\mathcal{T}_3)\leq(1-2e+e^2)/(-1+2e+8e^2)$.

\end{proof}
Sharma \textit{et. al} \cite{jain} studied the various properties of the class $\mathcal{S}^*_c=\mathcal{S}^*(1+(4/3)z+(2/3)z^2$. This class consists the functions $f$ such that $zf^{\prime}(z)/f(z)$ lies in the region bounded by the cardioid
$\Omega_c=\{u+iv:(9u^2+9v^2-18u+5)^2-16(9u^2+9v^2-6u+1)=0\}$. They proved the result that, for $1/3<a\leq 5/3$
\begin{align}\label{card}
\{w\in\mathbb{C}:|w-a|<(3a-1)/3\}\subseteq\Omega_c.
\end{align}
Various results related to this class are investigated in these papers \cite{sharma, sharma3, sharma2, sharma1}. Following corollary provides the radius of cardioid starlikeness for each class $\mathcal{T}_1, \mathcal{T}_2$ and $\mathcal{T}_3$.
\begin{corollary}\label{thcar}
	The following result holds for the class $\mathcal{S}_c^*$.
	\begin{enumerate}
		\item[(i)] $R_{\mathcal{S}_c^*}(\mathcal{T}_1)=1/4= 0.25$
		\item[(ii)] $R_{\mathcal{S}_c^*}(\mathcal{T}_2)=(4-\sqrt{10})/3\approx 0.279241$
		\item[(iii)] $R_{\mathcal{S}_c^*}(\mathcal{T}_3)=2/\sqrt{77}\approx 0.227921$
	\end{enumerate}
\end{corollary}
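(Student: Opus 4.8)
The plan is to reproduce the two-stage argument used for Corollaries \ref{thpara} and \ref{thexp}: first trap the cardioid class $\mathcal{S}^*_c$ between two order-type starlikeness classes already handled by Theorem \ref{thstar}, and then read off the radii and confirm their sharpness with the extremal functions.

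First I would fix the relevant order $\alpha$. Writing $\varphi_c(z) = 1 + (4/3)z + (2/3)z^2$, note that $\varphi_c(-1) = 1/3$ is the leftmost point of the cardioid $\Omega_c$, so $\real \varphi_c(z) > 1/3$ on $\mathbb{D}$ and hence $\mathcal{S}^*_c \subset \mathcal{S}^*(1/3)$. For the reverse containment I would take $a = 1$ in the inclusion \eqref{card}, which gives $\{w : |w - 1| < 2/3\} \subseteq \Omega_c$. Since $|zf'(z)/f(z) - 1| < 2/3$ already forces $\real(zf'(z)/f(z)) > 1/3$, this open disk is exactly the defining region of $\mathcal{S}^*_{1/3}$, so $\mathcal{S}^*_{1/3} \subset \mathcal{S}^*_c$. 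Together these give the chain
\[
\mathcal{S}^*_{1/3} \subset \mathcal{S}^*_c \subset \mathcal{S}^*(1/3).
\]

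Next, for any subclass $\mathcal{F}$ the chain yields $R_{\mathcal{S}^*_{1/3}}(\mathcal{F}) \leq R_{\mathcal{S}^*_c}(\mathcal{F}) \leq R_{\mathcal{S}^*(1/3)}(\mathcal{F})$. Taking $\mathcal{F} = \mathcal{T}_i$ and applying Theorem \ref{thstar} with $\alpha = 1/3$, the two outer radii coincide, so the sandwich collapses to $R_{\mathcal{S}^*_c}(\mathcal{T}_i) = R_{\mathcal{S}^*(1/3)}(\mathcal{T}_i)$. It then remains only to substitute $\alpha = 1/3$ into the three closed forms of Theorem \ref{thstar}; for example, in $(i)$ the radicand becomes $17 + 4/3 + 4/9 = 169/9$ with root $13/3$, whence $(19/3 - 13/3)/8 = 1/4$, while $(ii)$ and $(iii)$ reduce in the same way to $(4 - \sqrt{10})/3$ and $2/\sqrt{77}$.

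Finally I would confirm sharpness through the extremal functions $f_1, f_2, f_3$. Evaluating at $z = -\rho$, with $\rho$ the corresponding radius, the quantities $m(\rho), n(\rho), s(\rho)$ from the proof of Theorem \ref{thstar} all equal $1/3 = \varphi_c(-1)$, so $zf_i'(z)/f_i(z)$ lands exactly on $\partial\Omega_c$ there; since $m, n, s$ are decreasing, the value drops below the leftmost point $1/3$ for any larger radius and leaves $\Omega_c$, so the radius cannot be enlarged. I do not anticipate a genuine obstacle here: the whole proof is a transfer of Theorem \ref{thstar} through \eqref{card}, and the only point requiring care is the choice $a = 1$ (placing the disk at the image of the origin) together with the observation that its radius $2/3$ equals $1 - \alpha$ at $\alpha = 1/3$; everything else duplicates the arithmetic of the preceding corollaries.
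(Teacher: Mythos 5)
Your overall architecture is reasonable, but the step on which you hang the upper bound is false. You claim that $\varphi_c(-1)=1/3$ is the leftmost point of the cardioid, hence that $\real\varphi_c(z)>1/3$ on $\mathbb{D}$ and $\mathcal{S}^*_c\subset\mathcal{S}^*(1/3)$. This is not so: at $z=e^{2\pi i/3}$ one computes $\varphi_c(z)=1+\tfrac{4}{3}z+\tfrac{2}{3}z^2=i/\sqrt{3}$, so the cardioid passes through $\pm i/\sqrt{3}$ on the imaginary axis and $\inf_{\mathbb{D}}\real\varphi_c=0$ (indeed $\real\varphi_c(re^{2\pi i/3})=1-\tfrac{2r}{3}-\tfrac{r^2}{3}<1/3$ for $r$ near $1$). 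The cusp $1/3$ is only the leftmost point of $\Omega_c\cap\mathbb{R}$, not of $\Omega_c$; thus $\mathcal{S}^*_c\subset\mathcal{S}^*$ but $\mathcal{S}^*_c\not\subset\mathcal{S}^*(1/3)$, the sandwich $\mathcal{S}^*_{1/3}\subset\mathcal{S}^*_c\subset\mathcal{S}^*(1/3)$ fails, and the collapse ``$R_{\mathcal{S}^*_c}(\mathcal{T}_i)=R_{\mathcal{S}^*(1/3)}(\mathcal{T}_i)$ by containment'' is unjustified. This is exactly why the paper handles the cardioid differently from the parabolic and exponential cases, where the superordinate region genuinely sits inside the half-plane $\real w>\alpha$ with the same $\alpha$ as the inscribed disk.

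The corollary is still true, and your closing paragraph already contains the correct upper-bound argument, which is the one the paper uses: \eqref{card} with $a=1$ gives only the one-sided inclusion $\mathcal{S}^*_{1/3}\subset\mathcal{S}^*_c$, hence $R_{\mathcal{S}^*_{1/3}}(\mathcal{T}_i)\le R_{\mathcal{S}^*_c}(\mathcal{T}_i)$; for the reverse inequality one evaluates the extremal functions at $z=-\rho$, where $zf_i'(z)/f_i(z)$ is real and equals $m(\rho)=n(\rho)=s(\rho)=1/3$, the cusp of $\Omega_c$. Since these quantities are real and strictly decreasing in $r$, for $r>\rho$ the value is a real number below $1/3$, and the real section of the region bounded by the cardioid is exactly $[1/3,3]$ (the curve meets $\mathbb{R}$ only at $1/3$ and $3$), so the value escapes $\Omega_c$ and $f_i\notin\mathcal{S}^*_c$ on $\mathbb{D}_r$. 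You should promote this from a ``sharpness confirmation'' to the actual proof that $R_{\mathcal{S}^*_c}(\mathcal{T}_i)\le R_{\mathcal{S}^*_{1/3}}(\mathcal{T}_i)$, and justify ``leaves $\Omega_c$'' via the real trace of the cardioid rather than via the false assertion that $1/3$ is its leftmost point. Your substitution of $\alpha=1/3$ into Theorem \ref{thstar} and the resulting arithmetic are correct.
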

\begin{proof}
Equation $\eqref{card}$ provides the inclusion $\mathcal{S}^*_{1/3}\subset \mathcal{S}^*_c$ for $a=1$. Thus $R_{\mathcal{S}^*_{1/3}}(\mathcal{T}_i)\leq R_{\mathcal{S}^*_c}(\mathcal{T}_i)$ for $i=1,2,3$. The proof is completed by demonstrating $R_{\mathcal{S}^*_c}(\mathcal{T}_i)\leq R_{\mathcal{S}^*_{1/3}}(\mathcal{T}_i)$ for $i=1,2,3$.

$(i)$ For the function $f_1(z)=ze^{2z}\sqrt{1+z}$, at $z=-R_{\mathcal{S}_c^*}(\mathcal{T}_1)=-\rho$, we obtain
\[\left|\frac{zf^{\prime}_1(z)}{f_1(z)}\right|=\left|\frac{4\rho^2-7\rho+2}{2(1-\rho)}\right|=\frac{1}{3}.\]
Thus, $R_{\mathcal{S}_c^*}(\mathcal{T}_1)\leq1/4$.

$(ii)$ For the function $f_2(z)=z(1+z)e^z$, at $z=-R_{\mathcal{S}_c^*}(\mathcal{T}_2)=-\rho$, we have
\[\left|\frac{zf^{\prime}_2(z)}{f_2(z)}\right|=\left|\frac{1-3\rho+\rho^2}{(1-\rho)}\right|=\frac{1}{3}.\]
Thus, $R_{\mathcal{S}_c^*}(\mathcal{T}_2)\leq(4-\sqrt{10})/3$.

$(iii)$ For the function $f_3(z)=z(z+\sqrt{1+z^2})^3$, at $z=-R_{\mathcal{S}_c^*}(\mathcal{T}_3)=-\rho$,
\[\left|\frac{zf^{\prime}_3(z)}{f_3(z)}\right|=\left|1-\frac{3\rho}{\sqrt{1+\rho^2}}\right|=\frac{1}{3}.\]
This proves that $R_{\mathcal{S}_c^*}(\mathcal{T}_3)\leq2/\sqrt{77}$.
\end{proof}
In 2019, Cho \textit{et al.} \cite{viren} considered the class of starlike functions associated with sine function. The class $\mathcal{S}^*_{sin}$ is defined as $\mathcal{S}^*_{sin}=\{f\in\mathcal{A}:zf^{\prime}(z)/f(z)\prec 1+\sin z:= q_0(z)\}$ for $z\in\mathbb{D}$. For $|a-1|\leq \sin 1$, the following inclusion holds
\begin{align}\label{sin}
\{w\in\mathbb{C}:|w-a|<\sin 1-|a-1|\}\subseteq\Omega_s.
\end{align}
Here $\Omega_s:= q_0(\mathbb{D})$ is the image of the unit disk $\mathbb{D}$ under the mapping $q_0(z)=1+\sin z$.

We find the $\mathcal{S}^*_{sin}$- radius for the classes $\mathcal{T}_1, \mathcal{T}_2$ and $\mathcal{T}_3$.
\begin{corollary}\label{thsin}
	The following results hold for the class $\mathcal{S}_{sin}^*$.
	\begin{enumerate}
		\item[(i)] $R_{\mathcal{S}_{sin}^*}(\mathcal{T}_1)= (5+2\sin1-\sqrt{25-12\sin1+4\sin1^2})/8\approx 0.308961$
		\item[(ii)] $R_{\mathcal{S}_{sin}^*}(\mathcal{T}_2)= (2+\sin1-\sqrt{4+\sin1^2})/2\approx 0.335831$
		\item[(iii)] $R_{\mathcal{S}_{sin}^*}(\mathcal{T}_3)=\sin1/\sqrt{9-\sin1^2}\approx 0.292221$
	\end{enumerate}
\end{corollary}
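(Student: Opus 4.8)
The plan is to mirror the proof of the cardioid corollary (Corollary \ref{thcar}) rather than the parabola or exponential ones, because the inclusion \eqref{sin} supplies only a one-sided containment. Setting $\alpha = 1 - \sin 1$ and taking $a = 1$ in \eqref{sin} (permissible since $|a - 1| = 0 \le \sin 1$), I obtain $\{w : |w - 1| < \sin 1\} \subseteq \Omega_s$, so any $f$ with $|zf'(z)/f(z) - 1| < \sin 1$ lies in $\mathcal{S}^*_{sin}$; in particular $\mathcal{S}^*_{1-\sin 1} \subseteq \mathcal{S}^*_{sin}$, whence $R_{\mathcal{S}^*_{1-\sin 1}}(\mathcal{T}_i) \le R_{\mathcal{S}^*_{sin}}(\mathcal{T}_i)$. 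By the identity $R_{\mathcal{S}^*(\alpha)}(\mathcal{T}_i) = R_{\mathcal{S}^*_\alpha}(\mathcal{T}_i)$ of Theorem \ref{thstar} I then read off the left-hand radius from the three formulas there at $\alpha = 1 - \sin 1$; the substitutions $7 - 2\alpha = 5 + 2\sin 1$, $17 + 4\alpha + 4\alpha^2 = 25 - 12\sin 1 + 4\sin^2 1$, $3 - \alpha = 2 + \sin 1$, $5 - 2\alpha + \alpha^2 = 4 + \sin^2 1$, $1 - \alpha = \sin 1$ and $8 + 2\alpha - \alpha^2 = 9 - \sin^2 1$ are routine and reproduce exactly the asserted expressions, now established as lower bounds.

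For the reverse inequalities I would test the extremal functions at $z = -\rho$, where $\rho$ denotes the asserted radius. Using the expressions for $zf_i'(z)/f_i(z)$ already computed in the proof of Theorem \ref{thstar} --- namely $(4\rho^2 - 7\rho + 2)/(2(1-\rho))$, $(1 - 3\rho + \rho^2)/(1-\rho)$ and $1 - 3\rho/\sqrt{1+\rho^2}$ for $i = 1, 2, 3$ --- each evaluates to $\alpha = 1 - \sin 1$ by the defining equation of $\rho$, so that $|zf_i'(-\rho)/f_i(-\rho) - 1| = \sin 1$. Since $1 - \sin 1 = 1 + \sin(-1) = q_0(-1)$ is the image of the boundary point $-1$, it lies on $\partial\Omega_s$; as subordination to $q_0$ forces $zf_i'/f_i$ to remain in the open region $\Omega_s$, no disk strictly larger than $\mathbb{D}_\rho$ can keep $f_i$ in $\mathcal{S}^*_{sin}$. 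This yields $R_{\mathcal{S}^*_{sin}}(\mathcal{T}_i) \le \rho$, and hence equality.

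The main obstacle, and the reason the cardioid template is forced upon us, is that the two-sided squeeze used for the parabola and exponential classes is unavailable here: the containment $\mathcal{S}^*_{sin} \subseteq \mathcal{S}^*(1 - \sin 1)$ is \emph{false}. Indeed the leftmost point of $\Omega_s$ is a non-real boundary value $q_0(e^{i\theta_0})$ whose real part lies strictly below $1 - \sin 1$, so the genuine starlikeness order of $\mathcal{S}^*_{sin}$ is smaller than $1 - \sin 1$. What keeps $\alpha = 1 - \sin 1$ correct is that the disk bounds \eqref{disk1}, \eqref{disk2}, \eqref{disk} are all centred at $1$, and the radius of the largest disk about $1$ contained in $\Omega_s$ equals $\min_\theta |\sin(e^{i\theta})| = \min_\theta \sqrt{\sin^2(\cos\theta) + \sinh^2(\sin\theta)}$, which is attained at $\theta = 0, \pi$ with value $\sin 1$. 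Verifying that this minimum really occurs at $\theta = 0, \pi$ --- equivalently, that $q_0(\pm 1) = 1 \pm \sin 1$ are the nearest boundary points to $1$ --- is the one genuinely geometric step; once it is in hand, it simultaneously certifies the tightness of the inclusion in the first paragraph and the sharpness of the extremal functions in the second.
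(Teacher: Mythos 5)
Your proposal is correct and follows the same route as the paper: the lower bound comes from the inclusion $\mathcal{S}^*_{1-\sin 1}\subset\mathcal{S}^*_{sin}$ obtained by putting $a=1$ in \eqref{sin} together with Theorem \ref{thstar} at $\alpha=1-\sin 1$, and the upper bound from evaluating $zf_i'/f_i$ at $z=-\rho$ for the extremal functions $f_1,f_2,f_3$. In fact your sharpness step is the corrected form of the paper's: at $z=-\rho$ the value is $1-\sin 1=q_0(-1)\in\partial\Omega_s$ (the paper's displayed equations give this as $1+\sin 1$, which is not the value actually attained at $-\rho$, though it too lies on $\partial\Omega_s$), and your added observation that $\mathcal{S}^*_{sin}\not\subseteq\mathcal{S}^*(1-\sin 1)$ correctly explains why the two-sided squeeze used for the parabolic and exponential corollaries is unavailable here.
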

\begin{proof}
By putting $a=1$ in equation $\eqref{sin}$ we obtain the inclusion $\mathcal{S}^*_{1-sin1}\subset \mathcal{S}^*_{sin}$. Thus $R_{\mathcal{S}^*_{1-sin1}}\leq R_{\mathcal{S}^*_{sin}}$ for $i=1,2,3$. The proof is completed by demonstrating $R_{\mathcal{S}^*_{1-sin1}}\leq R_{\mathcal{S}^*_{sin}}$ for $i=1,2,3$.
$(i)$ For the function $f_1(z)=ze^{2z}\sqrt{1+z}$, at $z=-R_{\mathcal{S}_{sin}^*}(\mathcal{T}_1)=-\rho$, we get
\[\left|\frac{zf^{\prime}_1(z)}{f_1(z)}\right|=\left|\frac{4\rho^2-7\rho+2}{2(1-\rho)}\right|=1+\sin 1.\]
Thus, $R_{\mathcal{S}_{sin}^*}(\mathcal{T}_1)\leq (5+2\sin1-\sqrt{25-12\sin1+4\sin1^2})/8$.

 $(ii)$ For the function $f_2(z)=z(1+z)e^z$, at $z=-R_{\mathcal{S}_{\sin}^*}(\mathcal{T}_2)=-\rho$, we have
\[\left|\frac{zf^{\prime}_2(z)}{f_2(z)}\right|=\left|\frac{1-3\rho+\rho^2}{(1-\rho)}\right|= 1+\sin 1.\]
Thus, $R_{\mathcal{S}_{sin}^*}(\mathcal{T}_2)\leq (2+\sin1-\sqrt{4+\sin1^2})/2$.

 $(iii)$ For the function $f_3(z)=z(z+\sqrt{1+z^2})^3$, at $z=-R_{\mathcal{S}_{\sin}^*}(\mathcal{T}_3)=-\rho$, we get
\[\left|\frac{zf^{\prime}_3(z)}{f_3(z)}\right|=\left|1-\frac{3\rho}{\sqrt{1+\rho^2}}\right|= 1+\sin 1.\]
This proves that $R_{\mathcal{S}_{sin}^*}(\mathcal{T}_3)\leq\sin1/\sqrt{9-\sin1^2}$.
\end{proof}
In the next result, we find the radius for starlike functions accociated with a rational function. Kumar and Ravichandran \cite{sush} introduced the class of starlike functions associated with a rational function, $\psi(z)=1+(z^2k+z^2)/(k^2-kz))$ where $k=\sqrt{2}+1$, defined by $\mathcal{S}_R^*=\mathcal{S}^*(\psi(z))$. For $2(\sqrt{2}-1)<a\leq\sqrt{2}$, they had proved that
\begin{align}\label{rational}
\{w\in\mathbb{C}:|w-a|<a-2(\sqrt{2}-1)\}\subseteq\psi(\mathbb{D}) .
\end{align}
\begin{corollary}\label{thrational}
	The following results holds for the class $\mathcal{S}_R^*$.
	\begin{enumerate}
		\item[(i)] $R_{\mathcal{S}_R^*}(\mathcal{T}_1)=(11-4\sqrt{2}-\sqrt{57-24\sqrt{2}})/8\approx 0.0676475$
		\item[(ii)] $R_{\mathcal{S}_R^*}(\mathcal{T}_2)=(5-2\sqrt{2}-\sqrt{21-12\sqrt{2}})/2\approx 0.0821135$
		\item[(iii)] $R_{\mathcal{S}_R^*}(\mathcal{T}_3)=(\sqrt{-38+27\sqrt{2}})/2\sqrt{14}\approx 0.0572847$
	\end{enumerate}
\end{corollary}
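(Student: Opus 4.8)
The plan is to reproduce the template already used in Corollaries \ref{thpara}--\ref{thsin}: convert the disk-inclusion \eqref{rational} into an order-of-starlikeness inclusion, squeeze the desired radius between two radii that Theorem \ref{thstar} already equates, and then read off the closed forms by a single substitution. First I would put $a=1$ in \eqref{rational}; since $2(\sqrt 2-1)<1\le \sqrt 2$ this is admissible and gives
\[\{w:|w-1|<1-2(\sqrt 2-1)\}=\{w:|w-1|<3-2\sqrt 2\}\subseteq\psi(\mathbb D).\]
Because $\{w:|w-1|<1-\alpha\}$ is precisely the disk defining $\mathcal S^*_{\alpha}$, this inclusion reads $\mathcal S^*_{2(\sqrt 2-1)}\subseteq\mathcal S^*_R$, hence $R_{\mathcal S^*_{2(\sqrt 2-1)}}(\mathcal T_i)\le R_{\mathcal S^*_R}(\mathcal T_i)$ for $i=1,2,3$.

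For the reverse inequality I would note that the leftmost point of $\psi(\mathbb D)$ is $\psi(-1)=2(\sqrt 2-1)$, a short computation with $k=\sqrt 2+1$ (and indeed the disk in \eqref{rational} is internally tangent to $\partial\psi(\mathbb D)$ there). Consequently $\RE(zf'/f)>2(\sqrt 2-1)$ for every $f\in\mathcal S^*_R$, so $\mathcal S^*_R\subseteq\mathcal S^*(2(\sqrt 2-1))$. This yields the chain $\mathcal S^*_{2(\sqrt 2-1)}\subseteq\mathcal S^*_R\subseteq\mathcal S^*(2(\sqrt 2-1))$, and since Theorem \ref{thstar} asserts $R_{\mathcal S^*(\alpha)}(\mathcal T_i)=R_{\mathcal S^*_\alpha}(\mathcal T_i)$, the two outer radii coincide and squeeze the middle one, giving $R_{\mathcal S^*_R}(\mathcal T_i)=R_{\mathcal S^*_{2(\sqrt 2-1)}}(\mathcal T_i)$.

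It then remains to substitute $\alpha=2(\sqrt 2-1)=2\sqrt 2-2$ into the three formulas of Theorem \ref{thstar}. For $\mathcal T_1$ one checks $7-2\alpha=11-4\sqrt 2$ and $17+4\alpha+4\alpha^2=57-24\sqrt 2$, producing (i); for $\mathcal T_2$ one gets $3-\alpha=5-2\sqrt 2$ and $5-2\alpha+\alpha^2=21-12\sqrt 2$, producing (ii). The case $\mathcal T_3$ is where the only real work lies: the substitution gives $R=(3-2\sqrt 2)/\sqrt{12\sqrt 2-8}$, and I would rationalize this nested radical by squaring, clearing denominators, and verifying the identity $(17-12\sqrt 2)\cdot 56=(27\sqrt 2-38)(12\sqrt 2-8)$, which certifies $R=\sqrt{-38+27\sqrt 2}/(2\sqrt{14})$ as in (iii). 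I expect this radical simplification to be the main obstacle, since the Theorem's form and the stated form differ by a non-obvious denesting.

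Finally, although the squeeze already forces equality, I would confirm sharpness explicitly with the extremal functions $f_1,f_2,f_3$ exactly as in the earlier corollaries: evaluating at $z=-\rho$, where $\rho=R_{\mathcal S^*_R}(\mathcal T_i)$ is the root of the relevant equation $m(r)=\alpha$, $n(r)=\alpha$, or $s(r)=\alpha$ from the proof of Theorem \ref{thstar}, one obtains $zf_i'(z)/f_i(z)=2(\sqrt 2-1)$, the leftmost boundary point of $\psi(\mathbb D)$. Since the maps $m,n,s$ are decreasing, any larger radius pushes this value strictly to the left of $2(\sqrt 2-1)$ and hence outside $\psi(\mathbb D)$, so $f_i\notin\mathcal S^*_R$ beyond $\rho$; this shows the radius cannot be improved and completes the proof.
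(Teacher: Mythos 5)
Your proposal is correct and follows essentially the same route as the paper: the lower bound comes from setting $a=1$ in \eqref{rational} to get $\mathcal{S}^*_{2(\sqrt{2}-1)}\subset\mathcal{S}^*_R$, and sharpness comes from evaluating $zf_i'(z)/f_i(z)$ for the extremal functions at $z=-\rho$, where it hits the boundary value $2(\sqrt{2}-1)$ of $\psi(\mathbb{D})$. Your additional squeeze via $\mathcal{S}^*_R\subseteq\mathcal{S}^*(2(\sqrt{2}-1))$ is a harmless reinforcement (it mirrors what the paper does for the parabolic and exponential cases), and your algebraic verification of the substitution $\alpha=2(\sqrt{2}-1)$, including the denesting identity $(17-12\sqrt{2})\cdot 56=(27\sqrt{2}-38)(12\sqrt{2}-8)$, checks out.
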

\begin{proof}
	For $a=1$ equation $\eqref{rational}$ gives the inclusion $\mathcal{S}^*_{2(\sqrt{2}-1)}\subset\mathcal{S}^*_R$. Thus $R_{\mathcal{S}^*_{2(\sqrt{2}-1)}}(\mathcal{T}_i)\leq R_{\mathcal{S}^*_R}(\mathcal{T}_i)$ for $i=1,2,3$. We next show that $ R_{\mathcal{S}^*_R}(\mathcal{T}_i)\leq R_{\mathcal{S}^*_{2(\sqrt{2}-1)}}(\mathcal{T}_i)$ for $i=1,2,3$.

	$(i)$ For the function $f_1(z)=ze^{2z}\sqrt{1+z}$, at $z=-R_{\mathcal{S}_{R}^*}(\mathcal{T}_1)=-\rho$, we get
	\[\left|\frac{zf^{\prime}_1(z)}{f_1(z)}\right|=\left|\frac{4\rho^2-7\rho+2}{2(1-\rho)}\right|=2(\sqrt{2}-1).\]
	Thus, $R_{\mathcal{S}_R^*}(\mathcal{T}_1)\leq(11-4\sqrt{2}-\sqrt{57-24\sqrt{2}})/8$.

	$(ii)$ For the function $f_2(z)=z(1+z)e^z$, at $z=-R_{\mathcal{S}_{R}^*}(\mathcal{T}_2)=-\rho$, we have
	\[\left|\frac{zf^{\prime}_2(z)}{f_2(z)}\right|=\left|\frac{1-3\rho+\rho^2}{(1-\rho)}\right|= 2(\sqrt{2}-1).\]
	Thus, $R_{\mathcal{S}_R^*}(\mathcal{T}_2)\leq(5-2\sqrt{2}-\sqrt{21-12\sqrt{2}})/2$.

	$(iii)$ For the function $f_3(z)=z(z+\sqrt{1+z^2})^3$, at $z=-R_{\mathcal{S}_{R}^*}(\mathcal{T}_3)=-\rho$, we obtain
	\[\left|\frac{zf^{\prime}_3(z)}{f_3(z)}\right|=\left|1-\frac{3\rho}{\sqrt{1+\rho^2}}\right|= 2(\sqrt{2}-1).\]
	Thus, $R_{\mathcal{S}_R^*}(\mathcal{T}_3)\leq(\sqrt{-38+27\sqrt{2}})/2\sqrt{14}$.
\end{proof}
In 2020, Wani and Swaminathan \cite{wani} introduced the class $\mathcal{S}^*_{Ne}=\mathcal{S}^*(1+z-(z^3/3))$ that maps open disc $\mathbb{D}$ onto the interior of a two cusped kidney shaped curve $\Omega_{Ne}:= \{u+iv: ((u-1)^2+v^2-4/9)^3-4v^2/3<0\}$. For $1/3<a\leq 1$, they had proved that
\begin{align}\label{ne}
\{w\in\mathbb{C}:|w-a|<a-1/3\}\subseteq \Omega_{Ne}.
\end{align}
Our next theorem determines the $\mathcal{S}^*_{Ne}$ radius results for the classes $\mathcal{T}_1, \mathcal{T}_2$ and $\mathcal{T}_3$.
\begin{corollary}\label{thne}
	The following results hold for the class $\mathcal{S}^*_{Ne}$.
	\begin{enumerate}
		\item[(i)] $R_{\mathcal{S}^*_{Ne}}(\mathcal{T}_1)= 1/4=0.25$
		\item [(ii)] $R_{\mathcal{S}^*_{Ne}}(\mathcal{T}_2)=(4-\sqrt{10})/3\approx 0.279241$
		\item [(iii)] $R_{\mathcal{S}^*_{Ne}}(\mathcal{T}_3)=2/\sqrt{77}\approx 0.227921$
	\end{enumerate}
\end{corollary}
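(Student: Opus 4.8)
The plan is to follow the same two-sided strategy used for the cardioid class in Corollary~\ref{thcar}, exploiting the fact that the region $\Omega_{Ne}$ and the cardioid region $\Omega_c$ share the same leftmost boundary point. First I would set $a=1$ in the inclusion~\eqref{ne}; since then $a-1/3=2/3$, this reads $\{w:|w-1|<2/3\}\subseteq\Omega_{Ne}$, which is precisely the statement $\mathcal{S}^*_{1/3}\subset\mathcal{S}^*_{Ne}$ (recall $f\in\mathcal{S}^*_{1/3}$ means $|zf'(z)/f(z)-1|<2/3$). Consequently $R_{\mathcal{S}^*_{1/3}}(\mathcal{T}_i)\le R_{\mathcal{S}^*_{Ne}}(\mathcal{T}_i)$ for $i=1,2,3$. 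Substituting $\alpha=1/3$ into the formulas of Theorem~\ref{thstar} then yields the three candidate values $R_{\mathcal{S}^*_{1/3}}(\mathcal{T}_1)=1/4$, $R_{\mathcal{S}^*_{1/3}}(\mathcal{T}_2)=(4-\sqrt{10})/3$ and $R_{\mathcal{S}^*_{1/3}}(\mathcal{T}_3)=2/\sqrt{77}$, which therefore serve as lower bounds for the corresponding radii.

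For the reverse inequalities I would establish sharpness through the extremal functions $f_1,f_2,f_3$. The decisive observation is that the mapping $1+z-z^3/3$ sends $z=-1$ to $1/3$, so the leftmost point of $\partial\Omega_{Ne}$ is $1/3$, exactly as for the cardioid. Evaluating each extremal function at the real point $z=-\rho$, where $\rho$ denotes the candidate value above, one checks (just as in the proof of Corollary~\ref{thcar}) that
\[
\frac{zf_1'(z)}{f_1(z)}=\frac{4\rho^2-7\rho+2}{2(1-\rho)}=\frac13,\quad \frac{zf_2'(z)}{f_2(z)}=\frac{1-3\rho+\rho^2}{1-\rho}=\frac13,\quad \frac{zf_3'(z)}{f_3(z)}=1-\frac{3\rho}{\sqrt{1+\rho^2}}=\frac13.
\]
Since $1/3\in\partial\Omega_{Ne}$, the quantity $zf_i'(z)/f_i(z)$ already meets the boundary of $\Omega_{Ne}$ at $|z|=\rho$, and moving further out along the negative real axis pushes it to the left of $1/3$, outside $\Omega_{Ne}$. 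This forces $R_{\mathcal{S}^*_{Ne}}(\mathcal{T}_i)\le\rho$, completing the two-sided estimate and giving the stated equalities.

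The only step that requires genuine care is the sharpness claim: one must verify that $1/3$ is truly a boundary point of $\Omega_{Ne}$ and that the inscribed disk $\{|w-1|<2/3\}$ is internally tangent to $\partial\Omega_{Ne}$ precisely there, so that no larger radius survives. This is immediate from the defining equation $((u-1)^2+v^2-4/9)^3-4v^2/3=0$, which at $v=0$ reduces to $(u-1)^2=4/9$ and hence has smallest real root $u=1/3$. Everything else is a direct transcription of the cardioid argument, since the two regions agree at the relevant extremal point; indeed this coincidence is exactly why the radii in this corollary equal the cardioid radii of Corollary~\ref{thcar}.
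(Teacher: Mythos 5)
Your proposal is correct and follows essentially the same route as the paper: the lower bound comes from setting $a=1$ in the inclusion \eqref{ne} to get $\mathcal{S}^*_{1/3}\subset\mathcal{S}^*_{Ne}$ and then invoking Theorem \ref{thstar} with $\alpha=1/3$, while the upper bound comes from the extremal functions $f_1,f_2,f_3$ taking the boundary value $1/3$ of $\Omega_{Ne}$ at $z=-\rho$. Your added verification that $1/3$ is the leftmost point of $\partial\Omega_{Ne}$ (and the observed coincidence with the cardioid radii of Corollary \ref{thcar}) is a small but welcome amount of extra care that the paper leaves implicit.
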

\begin{proof}
From equation $\eqref{ne}$ we obtain the inclusion $\mathcal{S}^*_{1/3}\subset \mathcal{S}^*_{Ne}$ for $a=1$. This shows that $R_{\mathcal{S}^*_{1/3}}(\mathcal{T}_i)\leq R_{\mathcal{S}^*_{Ne}}(\mathcal{T}_i)$ for $i=1,2,3$.

 $(i)$ For the function $f_1(z)=ze^{2z}\sqrt{1+z}$, at $z=-R_{\mathcal{S}_{Ne}^*}(\mathcal{T}_1)=-\rho$,
\[\left|\frac{zf^{\prime}_1(z)}{f_1(z)}\right|=\left|\frac{4\rho^2-7\rho+2}{2(1-\rho)}\right|=\frac{1}{3}.\]
Thus, $R_{\mathcal{S}^*_{Ne}}(\mathcal{T}_1)\leq 1/4$.

$(ii)$ For the function $f_2(z)=z(1+z)e^z$, at $z=-R_{\mathcal{S}_{R}^*}(\mathcal{T}_2)=-\rho$,
\[\left|\frac{zf^{\prime}_2(z)}{f_2(z)}\right|=\left|\frac{1-3\rho+\rho^2}{(1-\rho)}\right|=\frac{1}{3}.\]
Thus, $R_{\mathcal{S}^*_{Ne}}(\mathcal{T}_2)\leq(4-\sqrt{10})/3$.

$(iii)$ For the function $f_3(z)=z(z+\sqrt{1+z^2})^3$, at $z=-R_{\mathcal{S}_{Ne}^*}(\mathcal{T}_3)=-\rho$,
\[\left|\frac{zf^{\prime}_3(z)}{f_3(z)}\right|=\left|1-\frac{3\rho}{\sqrt{1+\rho^2}}\right|= \frac{1}{3}.\]
This proves that $R_{\mathcal{S}^*_{Ne}}(\mathcal{T}_3)\leq2/\sqrt{77}$.
\end{proof}
Goel and Kumar \cite{goel} introduced the class $\mathcal{S}^*_{SG}:=\mathcal{S}^*(2/1+e^{-z})$, where $2/(1+e^{-z})$ is a modified sigmoid function that maps $\mathbb{D}$ onto the region $\Omega_{SG}:=\{w=u+iv:|\log(w/(2-w))|<1\}$. Precisely, $f\in\mathcal{S}^*_{SG}$ provided the function $zf^{\prime}(z)/f(z)$ maps $\mathbb{D}$ onto the region lying inside the domain $\Omega_{SG}$. For $2/(e+1)<a<2e/(1+e)$, Goel and Kumar \cite{goel} had proved the following inclusion
\begin{align}\label{sigmoid}
\{w\in\mathbb{C}:|w-a|<r_{SG}\}\subset \Omega_{SG},
\end{align}
provided $r_{SG}=((e-1)/(e+1))-|a-1|$.
 Next result is about the $\mathcal{S}^*_{SG}$ radius for the defined  classes.
\begin{theorem}\label{thsigm}
 The following results hold for the class $\mathcal{S}^*_{SG}$.
 \begin{enumerate}
 	\item[(i)] $R_{\mathcal{S}^*_{SG}}(\mathcal{T}_1)=(3+7e-\sqrt{41+42e+17e^2})/(8+8e)\approx 0.177213$
 	\item [(ii)]$R_{\mathcal{S}^*_{SG}}(\mathcal{T}_2)=(1+3e-\sqrt{5+6e+5e^2})(2+2e)\approx 0.204712$
 	\item[(iii)] $R_{\mathcal{S}^*_{SG}}(\mathcal{T}_3)=(\sqrt{1-2e+e^2})/(8+20e+8e^2)\approx 0.1559$
 \end{enumerate}
\end{theorem}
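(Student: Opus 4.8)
The plan is to run exactly the template used for the earlier corollaries: obtain a lower bound from the containment furnished by \eqref{sigmoid}, obtain a matching upper bound from the extremal functions $f_1,f_2,f_3$, and let Theorem \ref{thstar} collapse the two into a single value. First I would put $a=1$ in \eqref{sigmoid}, giving $r_{SG}=(e-1)/(e+1)$ and hence $\{w:|w-1|<(e-1)/(e+1)\}\subset\Omega_{SG}$. Since the defining inequality of $\mathcal{S}^*_\alpha$ reads $|zf'(z)/f(z)-1|<1-\alpha$, the choice $1-\alpha=(e-1)/(e+1)$, i.e. $\alpha=2/(e+1)$, turns this disk containment into the class inclusion $\mathcal{S}^*_{2/(e+1)}\subset\mathcal{S}^*_{SG}$. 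Moreover the leftmost real point of $\Omega_{SG}$, found from $\log(w/(2-w))=-1$, is $w=2/(e+1)$, so every $w\in\Omega_{SG}$ satisfies $\real w>2/(e+1)$, whence $\mathcal{S}^*_{SG}\subset\mathcal{S}^*(2/(e+1))$. Combining, $\mathcal{S}^*_{2/(e+1)}\subset\mathcal{S}^*_{SG}\subset\mathcal{S}^*(2/(e+1))$, and applying $R$ to each class over $\mathcal{T}_i$ gives $R_{\mathcal{S}^*_{2/(e+1)}}(\mathcal{T}_i)\le R_{\mathcal{S}^*_{SG}}(\mathcal{T}_i)\le R_{\mathcal{S}^*(2/(e+1))}(\mathcal{T}_i)$. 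By Theorem \ref{thstar} the two outer radii are equal, so the middle one equals their common value, namely the formula of Theorem \ref{thstar} evaluated at $\alpha=2/(e+1)$.

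It then remains to verify that the substitution $\alpha=2/(e+1)$ reproduces the stated closed forms, and to record sharpness through $f_1,f_2,f_3$. For part (i) I would solve $(4\rho^2-7\rho+2)/(2(1-\rho))=2/(e+1)$, which clears to $4(e+1)\rho^2-(7e+3)\rho+2(e-1)=0$; its discriminant is $(7e+3)^2-32(e^2-1)=17e^2+42e+41$, and the smaller root is $(3+7e-\sqrt{41+42e+17e^2})/(8+8e)$. Part (ii) is the analogous quadratic $(e+1)\rho^2-(3e+1)\rho+(e-1)=0$ with smaller root $(1+3e-\sqrt{5+6e+5e^2})/(2+2e)$, and part (iii) comes from rationalizing $3\rho/\sqrt{1+\rho^2}=(e-1)/(e+1)$, giving $\rho=(e-1)/\sqrt{8+20e+8e^2}$. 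Sharpness follows as in the earlier corollaries: at $z=-\rho$ one checks $zf_i'(z)/f_i(z)=2/(e+1)$, and since $|\log\!\big((2/(e+1))/(2-2/(e+1))\big)|=|\log(1/e)|=1$, this value sits on $\partial\Omega_{SG}$, so the disk $\mathbb{D}_r$ cannot be enlarged and $R_{\mathcal{S}^*_{SG}}(\mathcal{T}_i)\le\rho$.

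The quadratic and rationalization algebra is entirely routine; the one step that genuinely needs care is the geometric claim that the leftmost point of $\Omega_{SG}$ is attained on the real axis at $2/(e+1)$, since this is what simultaneously validates the inclusion $\mathcal{S}^*_{SG}\subset\mathcal{S}^*(2/(e+1))$ and identifies the correct boundary value for the extremal functions. Once that fact is secured, the equality of the outer radii in Theorem \ref{thstar} forces the result, and the extremal computation simply certifies the sharpness already dictated by the sandwich.
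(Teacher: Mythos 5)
Your proposal is correct and follows essentially the same route as the paper: the lower bound comes from putting $a=1$ in \eqref{sigmoid} so that the disks in \eqref{disk1}--\eqref{disk} lie in $\Omega_{SG}$ once their radius is at most $(e-1)/(e+1)$, the upper bound comes from checking that $zf_i'(-\rho)/f_i(-\rho)=2/(e+1)$ satisfies $|\log(w/(2-w))|=1$ and hence lies on $\partial\Omega_{SG}$, and your quadratics and the rationalization in (iii) reproduce the stated radii (indeed in the corrected forms, with the division by $2+2e$ in (ii) and the square root over the whole denominator in (iii)). The additional sandwich step $\mathcal{S}^*_{SG}\subset\mathcal{S}^*(2/(e+1))$, which you justify only by locating the leftmost real boundary point of $\Omega_{SG}$, would need the Goel--Kumar growth estimate to be rigorous, but it is redundant here since your direct computation with $f_1,f_2,f_3$ already supplies the matching upper bound.
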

\begin{proof}
$(i)$ The function defined by $m(r)= (4r^2-7r+2)/2(1-r), 0\leq r<1$ is a decreasing function. Let $\rho=R_{\mathcal{S}^*_{SG}}(\mathcal{T}_1)$ is the root of the equation $m(r)=2/(1+e)$. For $0<r\leq R_{\mathcal{S}^*_{SG}}(\mathcal{T}_1)$, we have $m(r)\geq 2/(1+e)$. That is
\[\frac{r(5-4r)}{2(1-r)}\leq \frac{e-1}{e+1}.\]
For the class $\mathcal{T}_1$, the centre of the disk is $1$, therefore the disk obtained in $\eqref{disk1}$ is contained in the region bounded by modified sigmoid, by equation $\eqref{sigmoid}$. For the function $f_1(z)=ze^{2z}\sqrt{1+z}$, at $z=-R_{\mathcal{S}^*_{SG}}(\mathcal{T}_1)=-\rho$, we have
\[\left|\log\frac{zf^{\prime}_1(z)/f_1(z)}{2-(zf^{\prime}_1(z)/f_1(z))}\right|=\left|\frac{(4\rho^2-7\rho+2)/2(1-\rho)}{2-((4\rho^2-7\rho+2)/2(1-\rho))}\right|=1.\]

$(ii)$ The function defined $n(r)=(r^2-3r+1)/(1-r), 0\leq r<1$ is a decreasing function. Let $\rho=R_{\mathcal{S}_{SG}^*}(\mathcal{T}_2)$ is the root of the equation $n(r)=1/3$. For $0<r\leq R_{\mathcal{S}_{SG}^*}(\mathcal{T}_2)$, we have $n(r)\geq 2/1+e$. That is,
\[\frac{r(-2+r)}{(-1+r)}\leq \frac{e-1}{e+1}.\]
For the class $\mathcal{T}_2$, the centre of the disk is $1$, therefore the disk obtained in $\eqref{disk2}$ is contained in the region bounded by the modified sigmoid, using equation $\eqref{sigmoid}$. For the function $f_2(z)=z(1+z)e^z$, at $z=-R_{\mathcal{S}^*_{SG}}(\mathcal{T}_3)=-\rho$,
\[\left|\log\frac{zf^{\prime}_2(z)/f_2(z)}{2-(zf^{\prime}_2(z)/f_2(z))}\right|=\left|\frac{(1-3\rho+\rho^2)/(1-\rho)}{2-((1-3\rho+\rho^2)/(1-\rho))}\right|=1.\]

$(iii)$  The function defined $s(r)= 1-3r/\sqrt{1+r^2}, 0\leq r<1$ is a decreasing function. Let
$\rho=R_{\mathcal{S}^*_{SG}}(\mathcal{T}_3)$ is the root of the equation $s(r)=2/(1+e)$. For $0<r\leq R_{\mathcal{S}^*_{SG}}(\mathcal{T}_3)$, we have $s(r)\geq 2/1+e$. That is
\[\frac{3r}{\sqrt{1+r^2}}\leq\frac{e-1}{e+1}.\]
For the class $\mathcal{T}_3$, the centre of the disk is 1, therefore the disk obtained in $\eqref{disk}$ is contained in the region bounded by the modified sigmoid, by equation $\eqref{sigmoid}$. For the function $f_2(z)=z(z+\sqrt{1+z^2})^3$, at $z=-R_{\mathcal{S}^*_{SG}}(\mathcal{T}_3)=-\rho$,
\[\left|\log\frac{zf^{\prime}_3(z)/f_3(z)}{2-(zf^{\prime}_3(z)/f_3(z))}\right|=\left|\frac{1-(3\rho)/\sqrt{1+\rho^2}}{2-(1-(3\rho)/(\sqrt{1+\rho^2}))}\right|=1.\]
\end{proof}


\begin{thebibliography}{99}
	
	
	
	
	
	
	\bibitem{nav} R. M. Ali, N. K. Jain\ and\ V. Ravichandran, Radii of starlikeness associated with the lemniscate of Bernoulli and the left-half plane, Appl. Math. Comput. {\bf 218} (2012), no.~11, 6557--6565. 
	
	\bibitem{ali2}  R. M. Ali, N. K. Jain\ and\ V. Ravichandran, On the radius constants for classes of analytic functions,  Bull.  Malays.  Math.  Sci.  Soc. (2) {\bf 36} (2013), no.~1, 23--38.

\bibitem{Kanika1} R. M. Ali, K. Sharma and V. Ravichandran, Starlikeness of analytic functions with subordinate ratios, preprint.

	
	\bibitem{asif}S. Afis\ and\ K. I. Noor, On subclasses of functions with boundary and radius rotations associated with crescent domains, Bull. Korean Math. Soc. {\bf 57} (2020), no.~6, 1529--1539.
	
	
	
	\bibitem{viren} N. E. Cho, V. Kumar, S. S. Kumar and V. Ravichandran, Radius problems for starlike functions associated with the sine function, Bull. Iranian Math. Soc. {\bf 45} (2019), no.~1, 213--232. 
	
	
	\bibitem{gan} S. Gandhi and  Ravichandran, V. Starlike functions associated with a lune. Asian-Eur. J. Math. 10 (2017), no. 4, 1750064, 12 pp. 
	
	\bibitem{goel} P. Goel\ and\ S. Sivaprasad Kumar, Certain class of starlike functions associated with modified sigmoid function, Bull. Malays. Math. Sci. Soc. {\bf 43} (2020), no.~1, 957--991. 
	
	
	
	\bibitem{sush} S. Kumar\ and\ V. Ravichandran, A subclass of starlike functions associated with a rational function, Southeast Asian Bull. Math. {\bf 40} (2016), no.~2, 199--212. 
	
	
	\bibitem{mac4}T. H. MacGregor, The radius of univalence of certain analytic functions, Proc. Amer. Math. Soc. {\bf 14} (1963), 514--520.
	
	\bibitem{mac3}T. H. MacGregor, The radius of univalence of certain analytic functions. II, Proc\ Amer. Math. Soc. {\bf 14} (1963), 521--524.
	
	\bibitem{mac2}T. H. MacGregor, A class of univalent functions, Proc. Amer. Math. Soc. {\bf 15} (1964), 311--317.
	
	\bibitem{men1}R. Mendiratta, S. Nagpal\ and\ V. Ravichandran, On a subclass of strongly starlike functions associated with exponential function, Bull. Malays. Math. Sci. Soc. {\bf 38} (2015), no.~1, 365--386. 
	
	
	
	\bibitem{sharma}V. Ravichandran and K. Sharma, Sufficient conditions for starlikeness, J. Korean Math. Soc. {\bf 52} (2015), no.~4, 727–-749.
	
	
	\bibitem{ron} F. R{\o}nning, Uniformly convex functions and a corresponding class of starlike functions, Proc. Amer. Math. Soc. 118 (1993), no. 1, 189--196. 
	
	\bibitem{vravicmft}T. N. Shanmugam\ and\ V. Ravichandran, Certain properties of uniformly convex functions,   Computational Methods And Function Theory 1994 (Penang), 319--324, Ser. Approx. Decompos., 5 World Sci. Publ., River Edge, NJ. 
	
	\bibitem{sharma3}K. Sharma, N.\ E.\ Cho and V.\ Ravichandran, Sufficient conditions for strong starlikeness, Bull. Iranian Math. Soc. (2020), DOI:10.1007/s41980-020-00452-z.
	
	
	\bibitem{jain}K. Sharma, N. K. Jain and V. Ravichandran, Starlike functions associated with a cardioid, Afr. Mat. (Springer) {\bf 27} (2016), no.~5, 923--939.
	
	\bibitem{sharma2}K. Sharma and V. Ravichandran, Applications of subordination theory to starlike functions, Bull. Iranian Math. Soc. {\bf 42} (2016), no.~3, 761--777.
	
	\bibitem{sharma1}K. Sharma and V. Ravichandran, Sufficient conditions for Janowski starlike functions, Stud. Univ. Babe\c s-Bolyai Math. {\bf 61} (2016), no.~1, 63--76.
	
	
	
	
	\bibitem{wani} L. A. Wani and A. Swaminathan, Starlike and convex functions associated with a nephroid domain, Bull. Malays. Math. Sci. Soc. (2020). https://doi.org/10.1007/s40840-020-00935-6
	
	
\end{thebibliography}
\end{document}